\documentclass[12pt]{article}
\usepackage{amsmath,amsfonts,amssymb,amsthm}
\usepackage{tcolorbox}
\definecolor{pantone312}{HTML}{009DD1}
\usepackage{mathtools}
\usepackage{bbm}
\usepackage{graphicx}

\usepackage{tabularx}
\usepackage{color}
\topmargin=-1.5cm
\textheight 23cm
\textwidth 15cm

\definecolor{darkgreen}{rgb}{0,0.55,0}
%
%

\newcommand{\grad}{\nabla}

\newcommand{\laplace}{\Delta}

\newcommand{\N}{\mathbbm{N}}

\newcommand{\R}{\mathbbm{R}}

\newcommand{\T}{\mathbbm{T}}
\newcommand{\thetak}{\theta^{\kappa}}


\usepackage{sectsty}

\sectionfont{\large}
\subsectionfont{\normalsize}
\subsubsectionfont{\normalsize}
\paragraphfont{\normalsize}

\newcommand{\dd}{{\rm d}}
\newcommand{\dx}{\,\dd x}

\newcommand{\dt}{\, \dd t}

\def\XXint#1#2#3{{\setbox0=\hbox{$#1{#2#3}{\int}$ }
\vcenter{\hbox{$#2#3$ }}\kern-.59\wd0}}

\newtheorem{theorem}{Theorem}
\newtheorem{lemma}{Lemma}

\pagecolor{white}


\begin{document}
\phantom{ }
\vspace{4em}

\begin{flushleft}
{\large \bf Bounds on the rate of enhanced dissipation}\\[2em]
{\normalsize \bf Christian Seis}\\[1em]

{\small Institut f\"ur Analysis und Numerik,  Westf\"alische Wilhelms-Universit\"at M\"unster,\\
Orl\'eans-Ring 10, 48149 M\"unster, Germany.\\
\emph{E-mail address}: \texttt{seis@wwu.de}}\\[2em]

{\bf Abstract:} We are concerned with flow enhanced mixing of passive scalars in the presence of diffusion. Under  the assumption that the velocity gradient is suitably integrable, we provide upper bounds on the exponential rates of  enhanced dissipation. Recent constructions indicate the optimality of   our results.

\vspace{2em}

	{\bf Statements and Declarations:} The  author states that there is no conflict of interest. Data sharing is not applicable to this article as no data sets were generated or analyzed during the current study.

\end{flushleft}

\vspace{2em}

\section{Introduction}
The theory of fluid mixing has become an active area of research in the applied mathematics community in recent years. 
Mixing refers to the homogenization process of a heterogeneously distributed physical quantity and can be driven by diffusion or the result of  advection by a straining fluid flow. Each of these transport mechanisms has a different action on the mixture. While diffusion balances local differences in concentration, which results thus in a decay in the concentration \emph{intensity}, advection creates  finer and finer filaments and acts thus on the \emph{scale} of fluctuations. The creation of small scales in turn amplifies the effect of diffusion in the fluid, a phenomenon usually referred to as \emph{enhanced dissipation}.

In order to describe these phenomena more accurately, we introduce the underlying model equation. 
Flow enhanced mixing processes of a diffusive medium in an incompressible fluid can be described by  the  advection-diffusion equation
\begin{equation}
\label{1}
\partial_t \thetak + u\cdot \grad\thetak = \kappa \laplace \thetak,
\end{equation}
that we consider, for simplicity, in the periodic box $\T^d = [0,1]^d$. Here $\thetak$ is the physical quantity of interest and $u$ is the  divergence-free velocity of the fluid. Since \eqref{1} is conservative, it is enough to consider the case in which $\thetak$ has zero mean. The  constant $\kappa$ is the diffusivity and can be interpreted as the inverse of the P\'eclet number in the non-dimensionalized setting. We shall always suppose that the diffusivity is small but finite, $\kappa\ll1$.

The above equation is linear as it is assumed that the observed quantity has no feedback on the fluid flow itself --- think, for instance, of dye in water. Such quantities are in the literature often referred to as passive scalars.

It is a well-known fact that the advection field has no impact on the $L^2$ energy balance law,
\begin{equation}\label{101}
\frac12 \frac{\dd}{\dd t}\|\theta^{\kappa}\|^2_{L^2} + \kappa \|\grad \theta^{\kappa}\|_{L^2}^2 = 0.
\end{equation}
By applying the standard  Poincar\'e estimate $\|\theta\|_{L^2}^2 \le C \|\grad \theta\|_{L^2}^2$ for mean-free functions,   we deduce a first estimate on the dissipation rate,
\begin{equation}\label{102}
\|\thetak(t)\|_{L^2} \le e^{-C\kappa t} \|\theta_0\|_{L^2},
\end{equation}
where $\theta_0$ is the initial configuration. This estimate is apparently independent of the particular  choice of the divergence-free velocity field $u$ and is optimal for the purely diffusive heat equation. However, in situations in which the fluid motion creates fine filaments,  the concentration gradients have to  increase and in view of the balance law \eqref{101},  we expect that the energy dissipates at a much higher rate. To be more specific and to fix terminology, we are concerned with the phenomenon of    \emph{dissipation enhancement}  if there exists a constant $D(\kappa)\gg \kappa$ such that
\begin{equation}
\label{302}
\|\thetak(t)\|_{L^2} \le e^{-D(\kappa) t} \|\theta_0\|_{L^2}
\end{equation}
for every choice of the initial datum.

A first qualitative evidence  of such enhanced dissipation effects was provided in \cite{ConstantinKiselevRyzhikZlatos08}. In this paper, a sharp characterization of (steady) incompressible flows that are dissipation enhancing is established. Quantitative results with precise exponential decay rates have been obtained only very recently. Particularly well understood is the effect of enhanced dissipation in the class of shear flows. Here, the dissipation rate increases to $D(\kappa) \sim \kappa^{\gamma}$ for some $\gamma\in(0,1)$ as observed, for instance, in \cite{BedrossianCotiZelati17,Wei19,CotiZelatiDrivas19,	CotiZelati20,ColomboCotiZelatiWidmayer20,
CotiZelatiDolce20}.\footnote{Here and in the sequel, we write $A\lesssim B$ if there exists a uniform constant $C$ independent of $\kappa$ such that $A\le CB$. Moreover, we write $A \sim B$ if $A\lesssim B $ and $B\lesssim A$. Finally $A\ll B$ if $A\le C B$ for some sufficiently  small $C$.} The results in these works rely on the particularly simple structure of shear velocity fields and there is no (obvious) way of translating the techniques developed in there to chaotic or turbulent fluid motions. The only rigorous result known to the author in which enhanced dissipation could be established in a more complex setting is for randomly forced  (and, in fact, chaotic \cite{BedrossianBlumenthalPunshon-Smith18}) fluids systems, which include stochastic Stokes and 2D Navier--Stokes equations \cite{BedrossianBlumenthalPunshonSmith19a}: It features a dissipation rate that depends logarithmically on the diffusivity constant, $D(\kappa) \sim \log^{-1}\frac1{\kappa}$.
 The velocity fields constructed in this work are quite regular as they satisfy  stochastic analogues  of the bound
\begin{equation}\label{303}
\int_0^t \|\grad u\|_{L^2}\dt \lesssim 1+t.
\end{equation}

In the present work, we make an attempt to derive  bounds  on the maximal rates of enhanced dissipation that apply to a large class of velocity fields and require only assumptions on their regularity but not on their structure. For instance, we will show that \emph{for any flow} in the regularity class \eqref{303}, the rate of enhanced dissipation is bounded as $D(\kappa) \lesssim \log^{-1}\frac1{\kappa}$, and thus, if we neglect the stochastic origin for a moment, our findings prove that the estimates in  \cite{BedrossianBlumenthalPunshonSmith19a} are optimal. Vice versa, as the construction in the latter work saturates our bound, the estimate $D(\kappa) \lesssim \log^{-1}\frac1{\kappa}$ derived here \emph{has to be sharp}.

The rates of enhanced dissipation are intimately related  to the rates of mixing (i.e., the decay of scales) in the non-diffusive setting, usually measured in terms of negative Sobolev norms \cite{MathewMezicPetzold05,LinThiffeaultDoering11,Thiffeault12}. A first rigorous connection between both phenomena was established in  \cite{FengIyer19,CotiZelatiDelgadinoElgindi20}. In \cite{CotiZelatiDelgadinoElgindi20}, the authors obtain a rate $D(\kappa) \sim \log^{-2}\frac1{\kappa}$, provided that there exists a Lipschitz regular velocity field that mixes (for $\kappa=0$) any initial datum at an exponential rate. Our result thus shows the optimality possibly modulo a power on the logarithm.  
 Exponential rates are optimal in the non-diffusive setting for velocity fields in the class \eqref{303}, see \cite{Depauw03,CrippaDeLellis08,Lunasin_etal12,Seis13,IyerKiselevXu14,
Leger18,YaoZlatos17,AlbertiCrippaMazzucato19,ElgindiZlatos19,
BedrossianBlumenthalPunshonSmith19b}.

We remark that proving upper bounds on the rates of enhanced dissipation is quite different from proving lower bounds on the $L^2$ decay. Indeed, the true counterpart of \eqref{302} would be a lower bound of the form
\[
\|\thetak(t)\|_{L^2} \ge e^{-D(\kappa)t} \|\theta_0\|_{L^2},
\]
which is a long-standing open problem (and is, in fact, controversial). Moreover, it is clear that bounds of this type cannot be true without additional  assumptions on the regularity of the initial datum since they are false for the heat equation. Of course, the same problem applies to the question that we address here.

In this regard, the progress made in the present paper is certainly modest. Yet, our contribution is substantial as it establishes  limitations on the effect of enhanced dissipation for the first time. Moreover, upper bounds  on the dissipation rate $D(\kappa)$ 
single out a time scale that is characteristic for the  diffusion in the presence of an irregular flow field.

We shall now state and discuss our precise result.

\begin{theorem}\label{T1}
Suppose that $u$ is a divergence-free velocity field  satisfying
\begin{equation}\label{304}
\int_0^t \|\grad u\|_{L^p}\dt \lesssim 1 + t^{\alpha},
\end{equation}
 for some $p\in(1,\infty]$ and $\alpha\in [0,1]$.
  Let $q\in[1,\infty]$ be such that $\frac1p+\frac1q\le 1$, and suppose  $\theta_0$ is a mean-zero initial configuration  satisfying $\|\theta_0\|_{L^1}\sim \|\theta_0\|_{L^q}\sim 1$ and $\|\grad\theta_0\|_{L^1}\lesssim 1$. If there are   positive  constants $D$ and $\Lambda$ such that
\begin{equation}\label{2}
 \|\thetak(t)\|_{L^q}   \le \Lambda e^{-Dt}\|\theta_0\|_{L^q} ,
\end{equation}
for any $t>0$, then there exists a constant $C>1$ independent of $D,\Lambda$ and $\kappa$ such that
\begin{equation}
\label{22}
D\lesssim \begin{cases}\Lambda^{\frac1{\alpha}}\log^{-\frac1{\alpha}}\frac1{\kappa}& \mbox{if }\alpha>0,\\
e^{C\Lambda}\kappa &\mbox{if }\alpha=0,
\end{cases}
\end{equation}
provided that $\kappa \ll1$.
\end{theorem}

For notational simplicity, we will focus on the case $p=q=2$ in the following discussion of Theorem \ref{T1}. We first note that the velocity field in \eqref{303} corresponds to the case $\alpha=1$ in \eqref{304} and generalizes the important class of velocity fields with a fixed power budget $\|\grad u\|_{L^2} \le 1$. In industrial stirring processes, this quantity describes  the amount of work an agent spends per time unit to maintain stirring. We have included the general factor $\Lambda$ in \eqref{2} to gain some degree of freedom. For instance, if we consider enhanced dissipation estimates in the form \eqref{302}, this $\Lambda$ is just $1$. This way, we deduce $D\lesssim \log^{-1}\frac1{\kappa}$ from \eqref{22} as announced earlier, and this estimate is optimal in light of the results from \cite{BedrossianBlumenthalPunshonSmith19a}. On the other hand, when studying enhanced dissipation with diffusivity independent rates $D\sim 1$, our findings indicate that the prefactor $\Lambda$ has to diverge at least logarithmically, $\log\frac1{\kappa}\lesssim \Lambda$. Whether this estimate is optimal or not is not known to the author. We remark that the work \cite{BedrossianBlumenthalPunshonSmith19a} provides an estimate with a diffusivity-independent rate, $D\sim 1$, and a diverging prefactor of size $\Lambda\sim \frac1{\kappa}$. To the best of our knowledge, diffusivity-independent rates have not (yet) been observed in the deterministic setting. Apparently, they do not result from a spectral gap estimate and it is not clear to the author whether these are generic.

In a certain sense, the two different considerations $\Lambda=1$ and $D\sim 1$ correspond to global-in-time and large-time estimates, respectively. Indeed, setting $t=0$ in \eqref{2} and considering \eqref{102}, it is obvious that  $\Lambda=1$ applies at small times and thus this choice  in \eqref{2} gives a globally-in-time applicable dissipation rate. To understand why $D\sim1$ is expected for the large time dynamics, it is worth to have a short but oversimplified heuristic discussion on flow-induced mixing in diffusive media. In case that the initial configuration is sufficiently smooth, early stage mixing is essentially due to advection, that is, the reduction of the average scale. In this stage, the decay of the $L^2$ norm is rather slow. Diffusion becomes relevant only at later times, when the typical scale is reduced to  the so-called Batchelor scale \cite{Batchelor59}, at which diffusion and advection balance. Dimensional  arguments suggest that this scale is of the order $\sqrt{\kappa}$, independently of the precise value of $\alpha>0$ in our assumption \eqref{304}.  The later time dynamics are then governed by the diffusion process, with an exponential rate $D$ proportional to $\kappa |k|^2$, where $k$ is the  dominated wavenumber, which should be inverse to the Batchelor scale. Hence, $D\sim \kappa |k|^2 \sim 1$ is the expected dissipation rate for large times. It was observed numerically in \cite{MilesDoering18}. The prefactor $\Lambda$ in this case is difficult to predict because it depends sensitively on the precise rate of early stage mixing and on the crossover time (which is decreasing with $\alpha$) from advection-dominated to diffusion-dominated mixing. Our estimates in \eqref{22} provide first rigorous bounds on that number.

The cases $\alpha\in(0,1)$ correspond to situations in which the mixing efficiency of the flow is decaying in time, which is relevant, for instance, in case of the Navier--Stokes equations without forcing. Here,  the effect of viscous dissipation slows down the velocity field and the  bound \eqref{304} holds true with $\alpha=1/2$.  A more academic application is a  mixing process with given, but decaying power budget. 
Our estimates suggest that the dissipation rate  $D$ in \eqref{302} is getting smaller when the advection  slows down. This is certainly expected, and our bound is again consistent with the findings in \cite{CotiZelatiDelgadinoElgindi20} modulo a factor of $2$ in the exponent. In case the mixing flow becomes negligible in finite time, $\alpha=0$, we cannot expect more than a change in the prefactor compared to the purely diffusive bound \eqref{102}, and thus, in this case \eqref{22} is sharp. If one aims for dissipation rates that are uniform in $D$, our estimates imply that $\Lambda \gtrsim \log\frac1\kappa$ for any value of $\alpha$. This lower bound is very likely not optimal because $\Lambda$ is supposed to be growing as a function of  $\alpha$. The author is not aware of any rigorous bounds on $\Lambda$ apart from those presented here.

We finally remark that the bound in \eqref{22} can be chosen independently of the precise gradient bound on the initial datum as long as $1/\kappa$ is large compared to $\|\grad \theta_0\|_{L^1}$, which is guaranteed in the hypothesis of the theorem.


Around the same time a first version of the  present paper was distributed by the author, Bru\`e and Nguyen uploaded a paper on the arXiv, that contains (among other results on diffusive mixing) estimates that are similar to (but slightly weaker than)  those in Theorem \ref{T1}, cf.~\cite{BrueNguyen20}. Indeed, in this work, the case $\alpha=1$ , $p>2 $ and $\Lambda\sim 1$ is treated and $D$ is bounded by $\log^{-\frac{p-1}{p}}\frac1{\kappa}$.

Considering the fact that estimates on enhanced dissipation have been completely open for many years, the proof of Theorem \ref{T1} is surprisingly short. It relies, however, heavily on a stability theory for continuity equations developed by the author in \cite{Seis17}, which in turn grew out of the Crippa--De Lellis theory for Lagrangian flows \cite{CrippaDeLellis08} and its Eulerian adaption in  studies of coarsening problems  \cite{BrenierOttoSeis11,OttoSeisSlepcev13} and mixing phenomena \cite{Seis13}. An understanding of the role of diffusive perturbations, which is, in a certain sense, the view point taken here, has been developed in the study of numerical schemes featuring numerical diffusion \cite{SchlichtingSeis17,SchlichtingSeis18}. The theory is based on suitable Kantorovich--Rubinstein distances, which have their origin in the theory of optimal mass transportation.
For a review of our method, we refer to \cite{Seis18}.

The remainder of this article is devoted to the proofs.

%
%
%
%
%
%
%
%

\section{Proofs}

We will make use of the following Kantorovich--Rubinstein distance with logarithmic cost function, which was introduced in \cite{Seis17}. For $\delta>0$ and any mean zero function $\theta$ on $\T^d$, we define
\[
D_{\delta}(\theta) = \inf_{\pi \in \Pi(\theta^+,\theta^-)} \iint \log\left(\frac{|x-y|}{\delta} +1\right) \dd\pi(x,y),
\]
where $\theta^+$ and $\theta^-$ denote the positive and negative parts of $\theta$, respectively, and $\Pi(\theta^+,\theta^-)$ is the set of transport plans $\pi :\T^d\times \T^d\to \R_+$ with marginals $\theta^+$ and $\theta^-$, i.e.,
\[
\iint \varphi(x)+\psi(y)\, \dd\pi(x,y)  = \int \varphi \theta^+\dx + \int \psi \theta^- \dx,
\]
for all continuous functions $\varphi$ and $\psi$ on the torus. We remark that the Kantorovich--Rubinstein distance is finite only if $\theta$ has zero mean, because then, both $\theta^+$ and $\theta^-$ have the same total mass,
\[
\int \theta^+\dx = \int \theta^-\dx.
\]

Our subsequent proofs will not use many properties of Kantorovich--Rubinstein distances, as some of the key estimates, above all the following lemma, can be taken from the existing literature. Yet, we refer the interested reader  to \cite{Villani03} for a comprehensive introduction into the theory of optimal transportation.

The rate of change of $D_{\delta}$   under solutions to  advection-diffusion equations has been investigated in \cite{OttoSeisSlepcev13,Seis18}, see also \cite{BrenierOttoSeis11,Seis13,Seis17} for  related estimates in the purely advective case. 

\begin{lemma}[\cite{OttoSeisSlepcev13,Seis18}]
\label{L1}
Let $\thetak$ be a mean-zero solution to the advection-diffusion equation \eqref{1}. Then $D_{\delta}(\thetak)$ is absolutely continuous and it holds
\begin{equation}
\label{4}
\left|\frac{\dd}{\dd t} D_{\delta}(\thetak)\right| \lesssim  \|\grad u\|_{L^p}  \|\thetak\|_{L^{p'}}  +\frac{\kappa}{\delta} \|\grad\thetak\|_{L^1},
\end{equation}
where $\frac1p+\frac1{p'}=1$. 
\end{lemma}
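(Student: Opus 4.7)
The plan is to use Kantorovich duality to reduce the problem to tracking an optimal dual potential, differentiate in time by an envelope-type argument, and then bound the advective and diffusive contributions separately. Since the logarithmic cost $c_\delta(x,y) = \log(|x-y|/\delta + 1)$ satisfies the triangle inequality (indeed $\log(a+1) + \log(b+1) \geq \log(a+b+1)$), the Kantorovich--Rubinstein theorem applies and
\[
D_\delta(\thetak) = \sup_\varphi \int_{\T^d} \varphi \thetak \dx,
\]
where the supremum is over all $\varphi$ satisfying $|\varphi(x) - \varphi(y)| \leq c_\delta(x,y)$. Every such $\varphi$ is Lipschitz with $\|\grad\varphi\|_{L^\infty} \leq 1/\delta$.

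First, I would establish absolute continuity of $t \mapsto D_\delta(\thetak)$ and, by an envelope/competitor argument, that for a.e.\ $t$
\[
\frac{\dd}{\dt} D_\delta(\thetak) = \int \varphi_* \partial_t \thetak \dx = \int \grad\varphi_* \cdot u\, \thetak \dx \;-\; \kappa \int \grad\varphi_* \cdot \grad\thetak \dx,
\]
where $\varphi_*$ is an optimal dual potential at time $t$ and I substituted \eqref{1} and integrated by parts. The diffusive term is immediately controlled by the Lipschitz bound $\|\grad\varphi_*\|_{L^\infty}\leq 1/\delta$, which gives $\left|\kappa\int \grad\varphi_*\cdot\grad\thetak\dx\right| \leq \frac{\kappa}{\delta}\|\grad\thetak\|_{L^1}$, matching the second term in \eqref{4}.

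The main obstacle is the advective term, where we must recover the factor $\|\grad u\|_{L^p}\|\thetak\|_{L^{p'}}$ rather than the trivial bound $\frac{1}{\delta}\|u\|_{L^1}$. Let $\pi^*$ be an optimal plan between $(\thetak)^+$ and $(\thetak)^-$. For any test function $h$ we have $\int h\thetak\dx = \iint [h(x)-h(y)]\dd\pi^*(x,y)$. Taking $h = \grad\varphi_*\cdot u$ and exploiting the first-order optimality relations, which give $\grad\varphi_*(x) = \grad_x c_\delta(x,y)$ and $\grad\varphi_*(y) = -\grad_y c_\delta(x,y)$ for $\pi^*$-a.e.\ $(x,y)$, I would compute
\[
h(x) - h(y) \;=\; \frac{(x-y)\cdot (u(x) - u(y))}{|x-y|\,(|x-y| + \delta)}.
\]
Invoking the standard pointwise estimate $|u(x)-u(y)| \lesssim |x-y|\bigl(M|\grad u|(x) + M|\grad u|(y)\bigr)$ valid for Sobolev $u$ with $p>1$, the integrand is majorised by $M|\grad u|(x) + M|\grad u|(y)$. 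Integrating against $\pi^*$ and reducing to the marginals $(\thetak)^\pm$ gives $\int M|\grad u|\,|\thetak|\dx$, whence Hölder's inequality together with the $L^p$-boundedness of the maximal operator yields the desired bound $\|\grad u\|_{L^p}\|\thetak\|_{L^{p'}}$. The technical crux is the rigorous justification of the envelope-theorem step (absolute continuity and the identification of $\frac{d}{dt}$ with the derivative evaluated at a fixed optimal $\varphi_*$) together with the $\pi^*$-a.e.\ identification of $\grad\varphi_*$ with $\grad_x c_\delta$, both of which rely on approximation arguments from \cite{OttoSeisSlepcev13,Seis18}.
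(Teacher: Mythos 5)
The paper does not prove Lemma \ref{L1} at all --- it is imported from \cite{OttoSeisSlepcev13,Seis18} --- so there is no internal proof to compare against, and your proposal is in essence a correct reconstruction of the argument in those references: Kantorovich duality with the metric cost $c_\delta$, the envelope step to differentiate through the supremum, the first-order optimality condition identifying $\grad\varphi_*$ with $\grad_x c_\delta$ on the support of the optimal plan so that the advective term becomes a difference quotient of $u$, the maximal-function estimate plus H\"older (using $p>1$) for the $\|\grad u\|_{L^p}\|\thetak\|_{L^{p'}}$ term, and the $1/\delta$-Lipschitz bound on the dual potential for the diffusive term. The technical points you defer (absolute continuity of $t\mapsto D_\delta(\thetak)$, rigorous justification of the envelope derivative, and the $\pi^*$-a.e.\ differentiability of $\varphi_*$) are precisely the ones treated by approximation in the cited works, so the proposal is sound.
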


Apart from its applications for mixing that we elaborate in the following, this estimate can be used to quantify the  (weak) convergence in the vanishing diffusivity  limit $\kappa \to 0$.  In this context, $\delta$ can be interpreted as the order of convergence. This observation has been exploited in order to bound the approximation error due to numerical diffusion generated by the upwind finite volume scheme for continuity equations in \cite{SchlichtingSeis17,SchlichtingSeis18}.

Our next result is a lower bound on the Kantorovich--Rubinstein distance in terms of the $L^1$ norms of $\theta$ and its gradient.

\begin{lemma}
\label{L3}
Let $\theta$ be a mean zero function in $W^{1,1}(\T^d)$. Then there exists a constant $C>0$ such that  
\begin{equation}
\label{7}
D_{\delta}(\theta) \gtrsim \log \left(\frac{\|\theta\|_{L^1}}{\delta C\|\grad\theta\|_{L^1}}+1\right) \|\theta\|_{L^1}.
\end{equation}
\end{lemma}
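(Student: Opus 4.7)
The strategy is to obtain a uniform lower bound, across all transport plans $\pi\in\Pi(\theta^+,\theta^-)$, on the amount of mass that must be transported at distance $\gtrsim r_0$ for a scale $r_0\sim \|\theta\|_{L^1}/\|\grad\theta\|_{L^1}$ intrinsic to $\theta$. Once this mass lower bound is in hand, the structure of the logarithmic cost converts it immediately into the claimed inequality. The main tool is a Kantorovich-type test function built by mollifying $\sgn\theta$ at scale $r_0$.

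First I would set $r_0:=c\|\theta\|_{L^1}/\|\grad\theta\|_{L^1}$ for a small universal constant $c>0$ and define $\varphi := r_0(\eta_{r_0}*\sgn\theta)$ for a standard symmetric mollifier $\eta_{r_0}$. This $\varphi$ enjoys three useful properties: it is Lipschitz with a constant uniform in $r_0$ (because $\|\grad(\eta_{r_0}*\sgn\theta)\|_{L^\infty}\lesssim 1/r_0$); its oscillation is bounded by $2r_0$; and, using the self-adjointness of convolution and the standard estimate $\|\eta_{r_0}*\theta-\theta\|_{L^1}\lesssim r_0\|\grad\theta\|_{L^1}=c\|\theta\|_{L^1}$, one obtains
\[
\int\varphi\,\theta\dx = r_0\int\sgn\theta\,(\eta_{r_0}*\theta)\dx \geq r_0(1-Cc)\|\theta\|_{L^1}\gtrsim r_0\|\theta\|_{L^1},
\]
provided $c$ is chosen small enough.

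Next I would use $\varphi$ as a test function against an arbitrary transport plan $\pi\in\Pi(\theta^+,\theta^-)$. The identity $\int\varphi\,\theta\dx=\iint(\varphi(x)-\varphi(y))\dd\pi$, combined with the two pointwise bounds $|\varphi(x)-\varphi(y)|\leq C_\eta|x-y|$ and $|\varphi(x)-\varphi(y)|\leq 2r_0$ applied on the sets $\{|x-y|<s\}$ and $\{|x-y|\geq s\}$ respectively, yields
\[
r_0\|\theta\|_{L^1}\lesssim C_\eta\, s\,\|\theta\|_{L^1}+2r_0\,\pi(\{|x-y|\geq s\}).
\]
Choosing $s:=r_0/(4C_\eta)$ and using the earlier pairing lower bound produces $\pi(\{|x-y|\geq s\})\gtrsim \|\theta\|_{L^1}$, uniformly in the choice of $\pi$. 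Combining with $\log(1+|x-y|/\delta)\geq \log(1+s/\delta)\mathbf{1}_{|x-y|\geq s}$ and taking the infimum over $\pi$ gives $D_\delta(\theta)\gtrsim \|\theta\|_{L^1}\log(1+r_0/\delta)$, which upon substituting the definition of $r_0$ is the desired inequality.

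The hard part will be the balancing in the second step: the test function $\varphi$ must simultaneously have a substantial pairing $\int\varphi\theta\dx\sim r_0\|\theta\|_{L^1}$, a Lipschitz constant independent of $r_0$, and an oscillation of order $r_0$. These three requirements together force the mollification scale to match precisely the intrinsic length $\|\theta\|_{L^1}/\|\grad\theta\|_{L^1}$ of $\theta$, and they dictate the appearance of this length inside the logarithm. Once those constants are properly tracked, the logarithmic cost converts the short-range/long-range split into the asserted logarithmic bound without further work.
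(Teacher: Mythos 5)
Your argument is correct and follows essentially the same route as the paper: you mollify the (optimal) dual test function $\sgn\theta$ at the intrinsic scale $\|\theta\|_{L^1}/\|\grad\theta\|_{L^1}$, split the transport plan into near and far pairs, and use the Lipschitz bound on the near part and the monotone logarithmic cost on the far part. The only difference is organizational: the paper keeps the dual function $\psi$ arbitrary and optimizes over two length scales $R\gg r$ at the end, whereas you commit to $\psi=\sgn\theta$ from the start and pass through an explicit lower bound on the mass transported over distance $\gtrsim r_0$, which is a transparent but equivalent repackaging.
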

The statement of the lemma is a consequence of an interpolation inequality between Kantorovich--Rubinstein distances with logarithmic cost function and the Sobolev norm, a variation of which was proved previously in \cite{BrenierOttoSeis11,OttoSeisSlepcev13,Seis13}. It is a generalization of the  endpoint Kantorovich--Sobolev inequality
\[
1\lesssim \log^{-1}\left(\|\grad\theta\|_{L^1}^{-1}+1\right)\inf_{\pi\in\Pi(\theta^+,\theta^-)} \iint \log\left(|x-y|+1\right)\, \dd \pi(x,y) 
\]
for probability distributions, see~\cite{Ledoux15} for standard Wasserstein versions.

\begin{proof}
We recall that by duality, it holds that
\begin{equation}\label{17}
 \|\theta\|_{L^1} = \sup_{\|\psi\|_{L^{\infty}}\le 1}\int \theta\psi\dx.
\end{equation}
We now pick $\psi$ arbitrary with $\|\psi\|_{L^{\infty}}\le 1$  and denote by subscript $R$ the convolution with a standard mollifier of scale $R$. We then split
\begin{equation}\label{13}
\int \theta\psi \dx = \int(\theta-\theta_R)\psi\dx + \int \theta \psi_R\dx,
\end{equation}
where we have used  symmetry properties of the mollifier to shift the subscript from $\theta$ to $\psi$. 

For the first term, we use the fact that $\|\theta-\theta_R\|_{L^1} \lesssim R\|\grad \theta\|_{L^1}$, so that
\begin{equation}\label{14}
\int (\theta-\theta_R)\psi\dx  \lesssim R\|\grad\theta\|_{L^1}.
\end{equation}
For the second one, we introduce a second auxiliary length scale $r$ and write
\begin{align*}
\MoveEqLeft\int \theta\psi_R\dx\\
&= \int (\theta^+-\theta^-)\psi_R\dx\\
& = \iint (\psi_R(x)-\psi_R(y))\,\dd\pi(x,y)\\
& = \iint_{|x-y|\le r} (\psi_R(x)-\psi_R(y))\,\dd\pi(x,y)+ \iint_{|x-y|> r} (\psi_R(x)-\psi_R(y))\,\dd\pi(x,y),
\end{align*}
where $\pi\in \Pi(\theta^+,\theta^-)$ is an arbitrary transport plan and we have used its marginal conditions in the second equality. On the one hand, because $\psi_R$ is Lipschitz and $\|\grad\psi_R\|_{L^{\infty}}\lesssim \frac1R\|\psi\|_{L^{\infty}}\le \frac1R$, we have that
\begin{align*}
 \iint_{|x-y|\le r} (\psi_R(x)-\psi_R(y))\,\dd\pi(x,y) \lesssim r \|\grad\psi_R\|_{L^{\infty}}\iint   \dd\pi(x,y) \lesssim \frac{r}R \|\theta\|_{L^1}.
\end{align*}
On the other hand, using $\|\psi_R\|_{L^{\infty}}\le \|\psi\|_{L^{\infty}}\le 1$, the monotonicity of the logarithm and setting $c(z) = \log\left(\frac{z}{\delta}+1\right)$, we estimate
\begin{align*}
\iint_{|x-y|> r} (\psi_R(x)-\psi_R(y))\,\dd\pi(x,y) & \le \frac{2\|\psi_R\|_{L^{\infty}}}{c(r)} \iint c(|x-y|)\, \dd\pi(x,y)\\
&\lesssim \frac{1}{c(r)} \iint c(|x-y|)\, \dd\pi(x,y).
\end{align*}
Combining the previous estimates and optimizing in $\pi$ on the right-hand side, we conclude that 
\[
\int\theta\psi_R\dx \lesssim \frac{r}{R}\|\theta\|_{L^1} + \frac1{c(r)}D_{\delta}(\theta).
\]
Plugging this estimate and \eqref{14} into the decomposition \eqref{13}, we arrive at
\[
\int \theta\psi\dx \lesssim R\|\grad\theta\|_{L^1} + \frac{r}{R}\|\theta\|_{L^1} + \frac1{c(r)}D_{\delta}(\theta),
\]
for any $\psi$ such that $\|\psi\|_{L^{\infty}}\le 1$. Maximizing in $\psi$ on the left-hand side and choosing $R\gg r$, we deduce that
\[
\|\theta\|_{L^1} \lesssim r \|\grad\theta\|_{L^1} + \frac1{c(r)}D_{\delta}(\theta),
\]
and thus, the result follows upon choosing $r\ll \frac{\|\theta\|_{L^1}}{\|\grad\theta\|_{L^1}}$.
\end{proof}

We are now in the position to prove our bound on the dissipation rate. 

\begin{proof}[Proof of Theorem \ref{T1}] We may without loss of generality assume that $D\le 1$ and $\Lambda\ge 1$. 

We notice that $1/D$ is the dissipation time scale, and we  set $t_n = n/D$ for $n\in\N$.
Integrating \eqref{4} over $[t_n,t_{n+1}]$  yields
\begin{equation}\label{331}
\left|D_{\delta}(\thetak(t_{n+1})) - D_{\delta}(\thetak(t_n))\right| \lesssim \int_{t_n}^{t_{n+1}}\|\grad u\|_{L^p} \|\thetak\|_{L^q}\dt +\frac{\kappa}{\delta} \int_{t_n}^{t_{n+1}} \|\grad\thetak\|_{L^1}\, dt.
\end{equation}

If $q\ge 2$, we use Jensen's inequality and the energy balance \eqref{101} to bound the gradient term on the right-hand side,
\begin{align*}
\frac{\kappa}{\delta} \int_{t_n}^{t_{n+1}} \|\grad\thetak\|_{L^1}\, dt &\le\frac1{\delta} \sqrt{\frac{\kappa}D}   \left(\kappa \int_{t_n}^{t_{n+1}} \|\grad \thetak\|_{L^2}^2\dt\right)^{\frac12} \\
&\le \frac1{\delta} \sqrt{\frac{\kappa}D}    \|\thetak(t_n)\|_{L^2}\\
&\le \frac1{\delta} \sqrt{\frac{\kappa}D}    \|\thetak(t_n)\|_{L^q}.
\end{align*}
Otherwise, if $q\le 2$, we use the generalized energy equality
\[
\|\thetak(t_{n+1})\|_{L^q}^q + \kappa q(q-1)\int_{t_n}^{t_{n+1}} \int_{\T^d} |\thetak|^{q-2}|\grad\thetak|^2\dx\dt  = \|\thetak(t_n)\|^q_{L^q},
\]
which is derived via a standard computation, and we estimate via interpolation and Jensen's inequality
\begin{align*}
\frac{\kappa}{\delta} \int_{t_n}^{t_{n+1}} \|\grad\thetak\|_{L^1}\dt & \le \frac{\kappa}{\delta} \int_{t_n}^{t_{n+1}} \left(\int_{\T^d} |\thetak|^{q-2}|\grad\thetak|^2\dx\right)^{\frac12}\|\thetak\|_{L^{2-q}}^{\frac{2-q}2}\dt\\
&\le\frac1{\delta} \sqrt{\frac{\kappa}D}
\left(\kappa \int_{t_n}^{t_{n+1}} \int_{\T^d} |\thetak|^{q-2}|\grad\thetak|^2\dx\dt\right)^{\frac12} \|\thetak(t_n)\|_{L^q}^{\frac{2-q}q}\\
&\le \frac1{\delta} \sqrt{\frac{\kappa}D}  \|\thetak(t_n)\|_{L^q}.
\end{align*}
In either case, we have that 
\begin{equation}
\label{305}
\frac{\kappa}{\delta} \int_{t_n}^{t_{n+1}} \|\grad\thetak\|_{L^1}\dt \lesssim\frac1{\delta} \sqrt{\frac{\kappa}D}   \|\thetak(t_n)\|_{L^q} \le   \frac{\Lambda}{\delta} \sqrt{\frac{\kappa}D}    e^{-Dt_n},
\end{equation}
where we have used \eqref{2} in the last inequality.

Regarding the velocity term in \eqref{331}, we now observe that the budget constraint  \eqref{304} and the enhanced dissipation assumption \eqref{2} imply that
\begin{equation}\label{330}
\begin{aligned}
\int_{t_n}^{t_{n+1}} \|\grad u\|_{L^p}\|\thetak\|_{L^q}\dt & \lesssim \Lambda e^{-Dt_n} \left(1+t_{n+1}^{\alpha}\right)\\
&\lesssim \Lambda e^{-Dt_{n+2}} t_{n+2}^{\alpha}\\
&\lesssim \Lambda  D^{-\alpha} e^{-Dt_{n+2}/2} ,
\end{aligned}
\end{equation}
where in the last inequality we use that the mapping $t\mapsto t^{\alpha} e^{-Dt/2}$ is decreasing for $t\ge   t_2 $.

Inserting the two estimates \eqref{305} and \eqref{330} into \eqref{331} then gives the bound
\[
\left|D_{\delta}(\thetak(t_{n+1})) - D_{\delta}(\thetak(t_n))\right| \lesssim \Lambda D^{-\alpha} e^{-Dt_{n+2}/2} +\frac{\Lambda}{\delta} \sqrt{\frac{\kappa}D}    e^{-Dt_n}.
\]
Hence, summing over $n$ and recalling that $t_n = n/D$, we find that
\begin{equation}\label{400}
\begin{aligned}
D_{\delta}(\theta_0) 
&\lesssim D_{\delta}(\thetak(t_N)) + \frac{\Lambda}{D^{\alpha}}  \sum_{n=0}^{N-1} e^{-\frac{n}2}   +\frac{\Lambda}{\delta}\sqrt{\frac{\kappa}D} \sum_{n=0}^{N-1}e^{-n} \\
&\lesssim D_{\delta}(\thetak(t_N)) + \frac{\Lambda}{D^{\alpha}}      +\frac{\Lambda}{\delta}\sqrt{\frac{\kappa}D} .
\end{aligned}
\end{equation}

We now use the lower bound on our Kantorovich--Rubinstein distance \eqref{7} and the assumption on the initial datum   to estimate the left-hand side from below. It holds that
\[
\log\left(\frac1{C\delta}+1\right) \lesssim D_{\delta}(\theta_0),
\]
for some $C>0$. This constant can be chosen larger than $1$ without restrictions, and thus,
\[
\log\left(\frac1{C\delta}+1\right)\ge  \log \left(\frac1{\delta}+1\right)-\log C\gtrsim \log \left(\frac1{\delta}+1\right)\ge \log\frac1{\delta},
\]
if $\delta$ is sufficiently small, which we will ensure later. On the other hand,   because $|x-y|\le 1$ on the torus, we  have the following brutal estimate on the Kantorovich--Rubinstein distance
\[
D_{\delta}(\thetak) \le \log\left(\frac1{\delta}+1\right) \iint \dd\pi(x,y) \lesssim \log\left(\frac1{\delta}+1\right) \|\thetak\|_{L^1},
\]
and thus by \eqref{2}, and the fact that $Dt_N=N$, \eqref{400} becomes
\[
\log \frac1{\delta} \lesssim\Lambda \log\left(\frac1{\delta}+1\right) e^{-N} + \frac{\Lambda}{D^{\alpha}}     +\frac{\Lambda}{\delta}\sqrt{\frac{\kappa}D}.
\]
Because $N$ was arbitrary, the first term on the right-hand side can be dropped. We thus arrive at
\begin{equation}\label{200}
\frac1C\log\frac1{\delta} \le  \frac{\Lambda}{D^{\alpha}}  +\frac{\Lambda}{\delta}\sqrt{\frac{\kappa}D},
\end{equation}
for some constant $C$ independent of $\Lambda$ and $D$.

To conclude, we consider separately the cases $\alpha\le 1/2$ and $\alpha>1/2$. 

\emph{Case $\alpha\le 1/2$.} We may without loss of generality assume that $\Lambda^2 \kappa \ll D$, because otherwise, if $D\lesssim \Lambda^2 \kappa$, the statement can be deduced from the estimate
\[
\Lambda^2 \kappa \lesssim \Lambda^{\frac1{\alpha}}\log^{-\frac1{\alpha}} \frac1{\kappa},
\]
which is trivially satisfied since $\Lambda\ge 1$ and $\kappa\ll1$. We optimize \eqref{200} with respect to $\delta$ by choosing $\delta = C\Lambda \sqrt{\frac{\kappa}D}$. Note that this $\delta$ can  assumed to be small by the previous argument.  With this,  estimate \eqref{200} becomes
\[
0 \le \frac{C \Lambda}{D^{\alpha}} + 1 + \log\left(C\Lambda\sqrt{\frac{\kappa}D}\right).
\]
On the one hand, if $\alpha=0$, the latter implies that
\[
\log\frac{D}{\kappa} \lesssim 1+ \log\Lambda + \Lambda \lesssim \Lambda,
\]
since $\Lambda\ge 1$. On the other hand, if $\alpha>0$, we deduce   for small $\kappa$ that
\[
\log\frac1{\kappa} \lesssim \frac{\Lambda}{D^{\alpha}}+ \log\Lambda  +\log\frac1D\lesssim \frac{\Lambda}{D^{\alpha}}.
\]
In either case, we infer the  desired estimate.

\emph{Case $\alpha>1/2$.} In this case, we choose $\delta  = \kappa^{\frac12} D^{\alpha- \frac12}$ in \eqref{200}, which is small  because $D\le1 $, to the effect that
\[
\log\frac1{\kappa} \lesssim \log\frac1{\kappa} + \log\frac1D \lesssim \frac{\Lambda}{D^{\alpha}}.
\]
Again, this is the stated estimate.

\end{proof}

\section*{Acknowledgement} 
This work is funded by the Deutsche Forschungsgemeinschaft (DFG, German Research Foundation) under Germany's Excellence Strategy EXC 2044 --390685587, Mathematics M\"unster: Dynamics--Geometry--Structure. The author thanks Hung Nguyen for sharing his preprint \cite{BrueNguyen20} and  Jacob Bedrossian for enlightening discussions on the results of \cite{BedrossianBlumenthalPunshonSmith19a}. He acknowledges critical comments by the anoymous referee.

\bibliography{euler_richtig}
\bibliographystyle{acm}

\end{document}